\begin{document}

\hfuzz4pt 

\setlength{\floatsep}{15pt plus 12pt}
\setlength{\textfloatsep}{\floatsep}
\setlength{\intextsep}{\floatsep}
\setlength{\intextsep}{\floatsep}

\newcommand{\mypdf}[3]{\begin{figure}[htbp]\begin{center}
      {\scalebox{#1}{\includegraphics{#2.pdf}}}
      \caption{#3}\label{fig:#2}
    \end{center}\end{figure}}
    
\makeatletter
\@namedef{subjclassname@2020}{1980 Mathematics Classification}
\makeatother

\theoremstyle{plain}
\newtheorem{prop}{Proposition}

\title[R-S Algorithms Obtained from Tableau Recursions]{ROBINSON-SCHENSTED
ALGORITHMS\\
obtained from\\TABLEAU RECURSIONS}

\date{June 2, 1987}

\begin{abstract}
The numbers $f_\lambda$ of standard tableaux of shape $\lambda\vdash n$
satisfy 2 fundamental recursions: $f_\lambda = \sum f_{\lambda^-}$ and
$(n + 1)f_\lambda=\sum f_{\lambda^+}$,
where $\lambda^-$ and $\lambda^+$ run over all shapes
obtained from $\lambda$ by adding or removing a square respectively. The first
of these recursions is trivial; the second can be proven algebraically from the
first. These recursions together imply algebraically the dimension formula
$n! =\sum f_\lambda^2$ for the irreducible representations of $S_n$. We show
that a combinatorial analysis of this classical algebraic argument produces an infinite
family of algorithms, among which are the classical Robinson-Schensted
{\it row\/} and {\it column\/} insertion algorithms. Each of our algorithms yields a
bijective proof of the dimension formula. 
\end{abstract}

\author{Adriano~M.~Garsia}

\address{Author's address:
  Department of Mathematics, University of California, San Diego, La Jolla, CA
92093, USA}
\thanks{Work supported by NSF grant at the Univ.~of Cal.~San Diego and by ONR
grant at the Mass.~Inst.~of Tech.}

\author{Timothy~J.~McLarnan}

\address{Author's current address:
  Department of Mathematics, University of Michigan, Ann Arbor, MI 48105-1003}
\subjclass{05A15, 05A19, 20C30, 20C35, 68-04, 68C05.}

\maketitle
 
\section* {Introduction}
 
The Robinson-Schensted row insertion algorithm (briefly RSA) yields a
correspondence (briefly RSC) between permutations and pairs of tableaux of the
same shape. It was discovered independently by Robinson [4] in the context of
group representation theory and over twenty years later by Schensted [6] in the
context of {\it sorting.} It was further studied and extended by Knuth [1,2] and
by Schützenberger~[7,8], among others. It has become a standard tool in the
combinatorial study of representations of the symmetric group $S_n$ and of
symmetric polynomials.

One way to see the representation theoretic significance of the RSA is to think
of it as a bijection proving the identity 
\begin{equation}
n!=\sum_{\lambda\vdash n}f_\lambda^2\;.\tag{I.1}
\end{equation}
The sum here is over all partitions $\lambda$ of $n$, and $f_\lambda$ denotes
the number of standard tableaux of shape $\lambda$. That is, the RSA takes any
permutation and produces from it a pair of standard tableaux of the same shape.
Since $f_\lambda$ is the dimension of the irreducible representation of $S_n$
corresponding to the partition $\lambda$, (I.1) is the specialization to $S_n$
of the general fact that if $\{A^\lambda\}$ are the inequivalent irreducible
representations of $G$, then
\begin{equation}
|G|=\sum\text{dim}^2(A^\lambda)\;.\tag{I.2}
\end{equation}

Formula (I.1) was known well before the development of the RSA. In particular,
Young [9] gave an algebraic proof which is repeated somewhat more readably by
Rutherford [5]. This proof begins with the fundamental recursion 
\begin{equation}
f_\lambda=\sum_{\lambda^-}f_{\lambda^-}\;.\tag{I.3}
\end{equation}
Here $\lambda^-$ varies over all partitions of $n - 1$ obtained by decreasing
one of the parts of 
$\lambda\vdash n$ by 1. From this downward recursion, Young proves the upward
recursion 
\begin{equation}
(n+1)f_\lambda=\sum_{\lambda^+}f_{\lambda^+}\;,\tag{I.4}
\end{equation}
where $\lambda^+$ ranges over partitions obtained by incrementing some part of
$\lambda$ by~1. These two recursions together imply (I.1). 

We shall show that a straightforward combinatorial analysis of this algebraic
proof produces the Robinson-Schensted algorithm. In fact, the argument outlined
above is exactly an algebraic recipe for this algorithm. The RSA is thus
intimately connected both with the earlier proofs of (I.1) and with the basic
recursions satisfied by standard tableaux. 

In the course of our construction, however, we will need to make a number of
choices affecting the outcome of the final algorithm. By freely varying these
choices, we can produce not only the classical algorithm, but also continuum
many other algorithms yielding bijections proving (I.1), (of course, there are
only finitely many distinct ones for any fixed $n$). These bijections may have
applications in permutation enumeration. For example, it is well known [6] that
the longest increasing subsequence of a permutation is the length of the longest
row in the associated tableaux under the RSC. The new correspondences may permit
enumeration of permutations by other statistics. 

Cryptographic applications are also a possibility. Using any given algorithm to
obtain a pair of tableaux from a permutation is quite rapid. The large number of
similar algorithms, however, might form the basis for a secure code. As a
(perhaps naive) example, we note that the map defined by using one algorithm to
produce a pair of tableaux from a permutation, then using the inverse of a different
algorithm to make a codeword from the pair of tableaux, can profoundly scramble
the original permutation. 

In order to make this paper self-contained, we first review the definition of
the RSA, then give Young and Rutherford's algebraic proof of (I.1), and finally
present our algorithmic translation of this argument and sketch examples of the
new bijections. 

The authors are indebted to C.~Greene for helpful comments at the very early
stages of this work. 

\section {Background}
If $n$ is a positive integer, a partition $\lambda\vdash n$ is a decomposition
of $n$ in the form 
$$
n = \lambda_1 + \lambda_2 +\ldots+\lambda_k\;,
$$
where the $\lambda_i$ are positive integers and 
$$
\lambda_1\ge\lambda_2\ge\ldots\ge\lambda_k\;.
$$
A partition can be identified with a Ferrers diagram, a left-justified array of
squares which we shall write right side up: 
$$
(4, 3, 1) \longleftrightarrow
\begin{array}{cccc}
\square&&&\\
\square&\square&\square&\\
\square&\square&\square&\square\\
\end{array}
$$
A standard tableau (or Young tableau) of shape $\lambda\vdash n$ is a filling of
the squares of the Ferrers diagram of $\lambda$ with the integers $1, 2, \ldots, n$ in
which each integer appears exactly once and in which the entries increase in
each column from bottom to top and in each row from left to right. An example
for the partition $\lambda$ above is 
$$
\begin{array}{cccc}
5&&&\\
3&6&8&\\
1&2&4&7\\
\end{array}
$$
If $T$ is a standard tableau, then the shape of $T$, $\text{sh}(T)$, is the
partition of whose Ferrers diagram $T$ is a filling. The number of standard
tableaux of shape $\lambda$ is denoted $f_\lambda$. 

The Robinson-Schensted Algorithm yields a recursively defined bijection
$$
\sigma\longleftrightarrow (P, Q)
$$
between elements $\sigma\in S_n$ and pairs $(P, Q)$
of standard tableaux with 
$$
\text{sh}(P) = \text{sh}(Q) = \lambda\vdash n.
$$
 To describe the algorithm, we first show how to insert a letter $b$ into a row 
$$
(a_1 <a_2 <\ldots< a_m)
$$
not containing $b$. If $b > a_m$, $b$ is just appended to the row to obtain 
$$
(a_1 < a_2 <\ldots< a_m < b).
$$
If, on the other hand, $a_i < b < a_{i+1}$ for some $i$, then $b$ replaces
$a_{i+1}$ to produce the row 
$$
(a_1 < a_2 <\ldots< a_i < b < a_{i+2} <\ldots< a_m).
$$
The element $a_{i+1}$ is said to be bumped out of the row by this process. 

An element $b$ can be inserted by repeated row insertions into a tableau $T$
with distinct entries not including $b$. The number $b$ is first inserted into
the bottom row of $T$. If some number $b'$ is bumped during this insertion, then
$b'$ is inserted into the second row of~$T$ in the same way. Any number bumped
from the second row of~$T$ is inserted into the third row, and so on. The
process continues until one of the insertions results in an entry settling at
the end of a row. The sequence of squares occupied by the bumped numbers is
called the bump path of $b$. An example is shown below: 
\begin{spreadlines}{3\jot}
\begin{gather*}
\begin{pmatrix}
9\\8&10\\
5&6&12&14\\1&2&3&7&11&\leftarrow&4
\end{pmatrix}
\ \rightarrow\ 
\begin{pmatrix}
9\\8&10\\
5&6&12&14&&\leftarrow&7\\
1&2&3&4&11
\end{pmatrix}\\
\ \rightarrow\ 
\begin{pmatrix}
9\\
8&10&&&&\leftarrow&12\\
5&6&7&14\\
1&2&3&4&11
\end{pmatrix}
\ \rightarrow\ 
\begin{pmatrix}
9\\
8&10&12\\
5&6&7&14\\
1&2&3&4&11
\end{pmatrix}\\
\end{gather*}
\end{spreadlines}

Finally, to compute the tableaux $P$ and $Q$ associated with the permutation
$\sigma\in S_n$, the entries $\sigma_1, \sigma_2,\ldots,\sigma_n$ of $\sigma$
are inserted one at a time from left to right. The left tableau $P$ is the
result of these insertions; the entries in the right tableau~$Q$ keep track of where
a new square is added to the boundary of $P$ at each stage.  An example should
make this clear. The steps in the insertion of the permutation 2641375 are: 
\begin{spreadlines}{3\jot}
\begin{gather*}
(\varnothing\leftarrow2641375)
\ =\ 
\left(\begin{pmatrix}2&,&1\end{pmatrix}\leftarrow641375\right)
\ =\ 
\left(\begin{pmatrix}2&6&,&1&2\end{pmatrix}\leftarrow41375\right)\ =\\
\left(\begin{pmatrix}6& & &3& \\2&4&,&1&2\end{pmatrix}\leftarrow1375\right)
\ =\ 
\left(\begin{pmatrix}6& & &4& & \\2& &,&3& \\1&4&
&1&2\end{pmatrix}\leftarrow375\right)\ =\\
\left(\begin{pmatrix}6& & &4& & \\2&4&,&3&5\\1&3&
&1&2\end{pmatrix}\leftarrow75\right)
\ =\ 
\left(\begin{pmatrix}6& & & &4& & \\2&4& &,&3&5\\1&3&7
& &1&2&6\end{pmatrix}\leftarrow5\right)\ =\\
\begin{pmatrix}6& & & &4& & \\2&4& 7&,&3&5&7\\1&3&5& &1&2&6\end{pmatrix}\ =\ (P,Q)
\end{gather*}
\end{spreadlines}
Here and henceforth, $(P, Q) = (\varnothing\leftarrow\sigma)$ denotes the pair
of tableaux obtained by row inserting the permutation $\sigma$.

It is not difficult to show that this algorithm yields a bijection. 

The algorithm we have described is usually referred to as {\it row insertion.}
An alternative algorithm, column insertion, is obtained by transposing the
process, and inserting the letters of the permutation $\sigma$ from right to
left. Its relation to row insertion is discussed in the references by Knuth and
Schützenberger. 

Before giving the details of the algebraic proof of (I.1), we introduce a bit
more notation. If $\lambda\vdash n$ is a partition of $n$, we have used
$\lambda^-\vdash n-1$ and $\lambda^+\vdash n+1$ to denote generic partitions
obtained from  $\lambda$ by removing or adding a square from the Ferrers
diagram. Similarly, let $\lambda^{+-}$ be any partition of $n$ obtained by first
adding a square to the boundary of $\lambda$ and then removing a {\it
different\/} square from the boundary of the resulting shape. The same shapes
are obtained by first removing and then adding a square. The shape $\lambda$
itself is not one of the $\lambda^{+-}$. In the Young lattice, the lattice of
all Ferrers diagrams ordered by inclusion, $\lambda^-$ is any predecessor
of~$\lambda$, and $\lambda^+$ is any successor. A fragment of the Young lattice
showing $\Lambda^- = \{\lambda^-\}$, $\Lambda^+= \{\lambda^+\}$, and
$\Lambda^{+-} = \{\lambda^{+-}\}$ for the shape $\lambda= (3, 3, 2)$ is
shown in Fig.~1. Observe that every 
$\lambda^{+-}$ is the predecessor of a unique $\lambda^+$, and the successor of
a unique $\lambda^-$. 
    
\mypdf{0.75}{lambdalattice}{The shape $\lambda=(3, 3, 2)$, the shapes
$\lambda^+$, $\lambda^-$, and $\lambda^{+-}$.}

Young's proof of (I.1) begins with the downward recursion 
\begin{equation}
f_\lambda=\sum_{\lambda^-}f_{\lambda^-}\;.\tag{1.1}
\end{equation}
The bijective proof of this recursion is trivial: removing the square containing
$n$ from a standard tableau of shape $\lambda$ produces a standard tableau whose
shape is some $\lambda^{-}$.

Young next proves the upward recursion,
\begin{equation}
(n + 1) f_\lambda = \sum_{\lambda^+}f_{\lambda^+}\;.\tag{1.2}
\end{equation}

The argument begins by observing that if $\lambda$ has $m$ predecessors in the
Young lattice, then it has $m +1$ successors. For the shape $(3, 3, 2)$ shown
above, $m= 2$. We call the $m +1$ positions at which a square can be added to
the boundary of $\lambda$ the {\it addible squares\/} of $\lambda$. The $m$
positions from which a square can be removed are the {\it removable squares\/}
of $\lambda$. For the shape $(3, 3, 2)$, these squares are shown below.
    
\mypdf{1.0}{addremove}{The shape $\lambda$, its addible squares $*$, its
removable squares $\bullet$.}

With this notation, and keeping in mind Fig.~1, the proof of (1.2) is an easy
induction. If $\lambda\vdash n=1$, the upward recursion is trivial. Suppose that
it is true for all partitions of numbers smaller than $n$. The downward
recursion from $\lambda$ says that 
\begin{equation}
\sum_{\lambda^-}f_{\lambda^-}=f_\lambda\;.\tag{1.3}
\end{equation}
Summing the downward recursions from the shapes $\lambda^+$ yields 
\begin{equation}
(m+1)f_\lambda+\sum_{\lambda^{+-}}f_{\lambda^{+-}}
=\sum_{\lambda^+}f_{\lambda^+}\;.\tag{1.4}
\end{equation}
This is because each $\lambda^+$ has $\lambda$ as a predecessor, and each
$\lambda^{+-}$ has a unique $\lambda^+$ as a successor. Similarly, the sum of
the upward recursions from the shapes $\lambda^-$ is 
\begin{equation}
mf_\lambda+\sum_{\lambda^{+-}}f_{\lambda^{+-}}
=n\sum_{\lambda^-}f_{\lambda^-}\;.\tag{1.5}
\end{equation}
Combining formulas (1.3)--(1.5) then gives 
\begin{equation}
(n+1)f_\lambda=f_\lambda+n\sum_{\lambda^-}f_{\lambda^-}
=
(m+1)f_\lambda+\sum_{\lambda^{+-}}f_{\lambda^{+-}}
=
\sum_{\lambda^+}f_{\lambda^+}\;,\tag{1.6}
\end{equation}
completing the proof. 

Formula (I.1) follows at once from (1.1) and (1.2): 
\begin{equation}
(n+1)\sum_{\lambda\vdash n}f_\lambda^2
=
\sum_{\lambda\vdash n}\sum_{\lambda^+}f_{\lambda^+}\cdot f_\lambda
=
\sum_{\lambda\vdash (n+1)}\sum_{\lambda^-}f_{\lambda}\cdot f_{\lambda^-}
=
\sum_{\lambda\vdash(n+1)}f_\lambda^2\;.\tag{1.7}
\end{equation}
Since clearly $1!=\sum_{\lambda\vdash1}f_\lambda^2$, this proves (I.1). 

\section{Results}

How can the arguments in (1.6) and (1.7) be rendered bijective? To prove (1.2)
combinatorially, one needs a bijection 
\begin{equation}
\phi :(S, i) \longleftrightarrow T,\tag{2.1}
\end{equation}
where $T$ varies over all standard tableaux whose shape is in $\Lambda^+$, and
$i$ runs over all integers $1\le i\le n+1$. It will be convenient to let $S$
vary not over the standard tableaux of shape $\lambda$, but over all tableaux of
shape $\lambda$ whose entries are the integers $1, 2, \ldots,i-1, i
+1,\ldots,n+1$. These pairs $(S, i)$ are in obvious bijection with the pairs
$(S, i)$ where $S$ is standard. 

The bijection $\phi$, like row and column insertion, maps a tableau and an
integer to a new tableau with one more square than the old. For this reason, we
call $\phi$ an {\it insertion scheme,\/} and we speak of the evaluation of
$\phi(S, i)$ as the insertion of $i$ into ~$S$. 

There are three equalities in (1.6). The first comes from (1.3), the downward
recursion from $f_\lambda$. The next comes from (1.5), the upward recursion
applied to the numbers $f_{\lambda^-}$. The last comes from (1.4), the downward
recursion applied to the numbers $f_{\lambda^+}$. This suggests the following
rough recipe for computing $\phi$. Assume that $\phi$ has already been defined
whenever $S$ is a tableau of size less than $n$, and let $\text{sh}(S)
=\lambda\vdash n$. 

\begin{enumerate}

\item Given $(S, i)$ with $\text{sh}(S) =\lambda\vdash n$ and $1 \le i\le n+1$,
remove the largest element 
from $S$ to produce the tableau $S^-$.

\item Let $T^{-}=\phi(S^-, i)$. That is, use the upward recursion $\phi$ to
associate a tableau 
$T^-$ with the pair $(S^-, i)$. Since $\text{sh}(T^-)$ is a successor of
$\text{sh}(S^-)$ in the Young 
lattice, we have either $\text{sh}(T^-) \in\Lambda^{+-}$ or $\text{sh}(T^-)
=\lambda$. 

\item As long as $\text{sh}(T^-) \in\Lambda^{+-}$, produce $T$ by adding $n +1$
to the boundary of $T^-$ so 
as to make $\text{sh}(T) \in\Lambda^+$. Since every $\lambda^{+-}$ is contained
in exactly one $\lambda^+$, this can be done in only one way. 

\end{enumerate}

Removing the largest entry from a tableau was how we proved the downward
recursion (1.1), so the three steps in this outline correspond to the three
equalities in (1.6). 

In order to turn this outline into an exact algorithm for $\phi$, and thus to
prove (1.2) bijectively, we must settle two details. First, what happens if $i=
n +1$, in which case $\phi(S^-, i)$ makes no sense? Second, what if
$\text{sh}(T) =\lambda$? To answer these questions, we need to look more closely
at the arguments in Section 1. 

Formula (1.6) uses the fact that every Ferrers diagram has one more addible
square than removable square. By a {\it bumping scheme} we mean a family of
injections proving this fact, i.e., a set $\{\beta_\lambda: \lambda\vdash n, n
\in{\mathbb N}\}$ of injections, one for each shape, such that 
$$
\beta_\lambda : \{\text{removable squares of $\lambda$}\} \rightarrow
\{\text{addible squares of $\lambda$}\}
$$
Equivalently, $\beta_\lambda$ can be regarded as an injection $\beta_\lambda :
\Lambda^-\rightarrow\Lambda^+$. These injections can be chosen completely
arbitrarily. Under any bumping scheme, each $\lambda$ will have a unique addible
square not in the image of $\beta_\lambda$. This square is called the {\it lone
square\/} of~$\lambda$.

A simple bumping scheme is that in which any removable square is mapped by
$\beta_\lambda$ to the addible square one row higher. The lone square thus lies
at the end of the bottom row. This scheme, which will give rise to row
insertion, is illustrated in Fig.~3. 

\mypdf{1.0}{rowbump}{A typical shape $\lambda$ (left) and the effect of the
injection $\beta_\lambda$ for the row insertion bumping scheme (right). Addible
squares are denoted $*$ and $+$ , the square marked $+$ being the lone square.}

A slightly more complex bumping scheme might take the bottom removable square of
any shape to the top addible square, the second lowest removable square to the
second highest addible square, and so on. For this scheme as well, the lone
square is always in the first row. Figure 4 shows a picture. 

\mypdf{1.0}{hardbump}{An example of the injection $\beta_\lambda$ for a more
complicated bumping scheme.}

Since there is free choice of the injection $\beta_\lambda$ for each Ferrers
diagram $\lambda$, there are continuum many bumping schemes. 

We now fix a bumping scheme and return to fill in the details in our
construction of the bijection $\phi$. Given an integer $i$ such that $1\le i\le
n+1$ and a tableau $S$ of shape $\lambda\vdash n$ whose entries are
$1,\ldots,i-1, i +1,\ldots,n+1$, the tableau 
$T= \phi(S, i)$ is constructed as follows:

\begin{enumerate}

\item If $i=n+1$, $T$ is obtained by adding $n+1$ to $S$ in the lone square of
$\lambda$. Call 
this situation {\bf Case I\/}.
\par\noindent
If $i\ne n+1$, begin by removing the largest entry in $S$ to produce the
tableau~$S^-$.

\item If we are not in Case I, $T^- = \phi(S^-, i)$ is computed recursively. There
are now 2 more cases.

\item {\bf Case II:\/} If $\text{sh}(T^-) \ne\lambda$, then $\text{sh}(T^-) =
\lambda^{+-}$ is the predecessor of a unique 
shape $\lambda^+\in\Lambda^+$. The shape $\lambda^+$ is obtained by adding to
$\lambda^{+-}$ the unique square of $\lambda$ which is not in $\lambda^{+-}$. To
produce $T$, add $n +1$ to $T^-$ in this square.
\par\noindent
{\bf Case III:\/} Finally, suppose $\text{sh}(T^-) = \lambda$. In this case, let
$\lambda^+ = \beta_\lambda(\text{sh}(S^-))$. Produce $T$ from $T^-$ by adding
$n+1$ to the unique square in 
$\lambda^+/\lambda = \{\text{squares in $\lambda^+$ but not in $\lambda$}\}$. In
other words, place $n +1$ in the addible square of $\lambda$ associated by the
bumping scheme with the removable square which contained $n +1$ in $S$. Even
more informally, bump $n +1$ using $\beta_\lambda$.

\end{enumerate}

The purpose of this construction is to associate the $m$ copies of $f_\lambda$
on the left hand side of (1.5) with $m$ of the copies of $f_\lambda$ on the left
hand side of (1.4) in some explicit fashion. The last copy of $f_\lambda$ in
(1.4) comes from Case I. A bit of reflection shows that this algorithm is an
exact combinatorial rephrasing of (1.6). 

We will give examples of insertion schemes $\phi$ below, but first we prove two
propositions establishing that $\phi$ is a bijection and giving a more
computationally convenient description. 

\begin{prop}

$\phi$ is a bijection.

\end{prop}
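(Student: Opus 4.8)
The plan is to construct the inverse map $\psi$ explicitly by reversing the three cases in the definition of $\phi$, and then to verify that $\psi\circ\phi$ and $\phi\circ\psi$ are the identity by induction on $n$. As a consistency check, observe that for a fixed shape $\lambda\vdash n$ the domain of $\phi$ has $(n+1)f_\lambda$ elements (there are $f_\lambda$ choices of $S$ for each of the $n+1$ values of $i$) while the codomain has $\sum_{\lambda^+}f_{\lambda^+}$ elements; these agree by (1.2). Hence it would already suffice to prove $\phi$ injective, but exhibiting the inverse is no harder and is what we shall want in the sequel.

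First I would check that $\phi$ is well defined. In each of Cases I, II, III the number $n+1$ is placed in an addible square of the relevant shape and is the largest entry present, so the output $T$ is a genuine standard tableau and $\text{sh}(T)\in\Lambda^+$; moreover the recursive evaluation $\phi(S^-,i)$ in Cases II and III is applied to a tableau $S^-$ of size $n-1$, so the recursion descends strictly in size, terminates (necessarily in Case I or the base case), and the inductive hypothesis applies at each stage.

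The heart of the matter is the following trichotomy, which makes $\psi$ unambiguous. Given $T$ of shape $\mu=\lambda^+\in\Lambda^+$, let $s$ be the cell of $T$ containing $n+1$ and let $c=\mu/\lambda$ be the cell by which $\mu$ exceeds the fixed shape $\lambda$. Tracing the construction shows that $s=c$ exactly in Cases I and III, whereas in Case II the entry $n+1$ lands in a cell of $\lambda$, so $s\neq c$; and when $s=c$, Case I places $n+1$ in the lone square while Case III places it in a square in the image of $\beta_\lambda$, so the lone-square test (the lone square being the unique addible square of $\lambda$ not in that image) separates the two. This lets me define $\psi$ by the three branches:
\begin{enumerate}
\item if $s=c$ is the lone square of $\lambda$ (Case I), return $i=n+1$ and let $S$ be $T$ with $n+1$ deleted;
\item if $s=c$ is not the lone square (Case III), set $\lambda^-=\beta_\lambda^{-1}(\mu)$, recursively compute $(S^-,i)=\psi\bigl(T\setminus\{n+1\}\bigr)$ relative to the fixed shape $\lambda^-$, and return $S$ obtained from $S^-$ by reinserting $n+1$ in the removable cell $\lambda/\lambda^-$;
\item if $s\neq c$ (Case II), then $\text{sh}\bigl(T\setminus\{n+1\}\bigr)=\lambda^{+-}$ is the successor of a unique $\lambda^-$; recursively compute $(S^-,i)=\psi\bigl(T\setminus\{n+1\}\bigr)$ relative to that $\lambda^-$ and again reinsert $n+1$ in the cell $\lambda/\lambda^-$ to form $S$.
\end{enumerate}
Here I use the two structural facts from Section 1, namely that $\beta_\lambda$ is injective (so $\beta_\lambda^{-1}$ is defined on its image and the lone square makes sense) and that each $\lambda^{+-}$ is the successor of a unique $\lambda^-$, to guarantee that every branch is unambiguous and returns a pair $(S,i)$ with $\text{sh}(S)=\lambda$.

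Finally I would establish $\psi\circ\phi=\mathrm{id}$ and $\phi\circ\psi=\mathrm{id}$ by induction on $n$. The base case $n=0$ is immediate: the only datum is $S=\varnothing$, $i=1$, which produces via Case I the one-cell tableau with entry $1$, and branch (1) of $\psi$ recovers it. For the inductive step, the trichotomy above shows that $\psi$ selects precisely the case that $\phi$ used; branch (1) involves no recursion, while branches (2) and (3) recurse on tableaux of strictly smaller index for the shape $\lambda^-$, so the inductive hypothesis closes both composition identities. I expect the main obstacle to be exactly this case-matching verification: one must confirm that the position of $n+1$ together with the lone-square test recovers, with no ambiguity, which of Cases I, II, III produced $T$, and that the shape bookkeeping (recovering $\lambda^-$ by $\beta_\lambda^{-1}$ in Case III and by the unique-successor property in Case II, together with the reinsertion cell $\lambda/\lambda^-$) undoes the forward step exactly. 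Once that is checked, everything else is routine.
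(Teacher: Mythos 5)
Your proposal is correct and follows essentially the same route as the paper: you construct $\phi^{-1}$ explicitly with three branches that invert Cases I, II, III of the definition of $\phi$ (distinguishing them by whether $n+1$ sits in the cell $\mathrm{sh}(T)/\lambda$ and, if so, whether that cell is the lone square), recurse relative to the appropriate shape $\lambda^-$, and reinsert $n+1$ in the cell $\lambda/\lambda^-$. Your trichotomy argument simply spells out the case-matching that the paper dismisses as "straightforward to see," so there is nothing substantively different to compare.
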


\renewcommand{\qedsymbol}{{\bf QED}}

\begin{proof}

We explicitly compute 
$$
\phi^{-1}:T\longleftrightarrow(S, i)
$$
The idea of the construction is straightforward: one removes $n + 1$ from $T$,
applies $\phi^{-1}$ recursively to produce a pair $(S^-, i)$ with shape
$\text{sh} (S^-) \in\Lambda^-$, and then adds $n+1$ to the boundary of $S^-$ to
obtain the tableau $S$ of shape $\lambda$. The details look like this (steps are
numbered to agree with the steps in the definition of $\phi$):

\begin{enumerate}

\item[(3)] Beginning with a shape $\lambda$ and a standard tableau $T$ of some
shape $\lambda^+$, remove 
the square containing $n +1$ from $T$. Call the resulting standard tableau
$T^-$.

\end{enumerate}

\par\noindent
There are again three cases.

\smallskip

\par\noindent{\it Case I:\/} If $\text{sh}(T^-) = \lambda$ and $T$ is obtained
from $T^-$ by adding $n+1$ in the lone square, let $(S, i) = (T^-, n + 1)$.

\medskip

\par\noindent{\it Case II:\/} If $\text{sh}(T) \in\Lambda^{+-}$, then there is a
unique shape $\lambda^-\in\Lambda^-$ such that 
$\lambda^-\subseteq\text{sh}(T^-)$. With this shape $\lambda^-$, proceed as
follows:

\smallskip

\begin{enumerate}

\item[(2)] Use $\phi^{-1}$ recursively to map the tableau $T^-$, whose shape is
a successor of $\lambda^-$, to 
a pair $(S^-, i)$ with $\text{sh}(S^-) = \lambda^-$ and $1 \le i\le n$.

\item[(1)] Produce $S$ from $S^-$ by adding $n+1$ to the unique square in
$\lambda/\lambda^-$. It is not 
hard to see that this is the same square which contained $n +1$ in $T$. Both
elements of the pair $(S, i)$ are now defined. 

\end{enumerate}

\smallskip

\par\noindent
{\it Case III:} Finally, suppose that $\text{sh}(T^-)=\lambda$ and $\text{sh}(T)
\in \text{image} (\beta_\lambda)$. In other words, $T$ is not obtained from
$T^-$ by adding $n+1$ in the lone square. Let
$\lambda^-=\beta_\lambda^{-1}(\text{sh}(T))$, i.e., find the removable square of
$\lambda$ associated by the bumping scheme with the addible square in
$\text{sh}(T)/\lambda$. Remove this square from $\lambda$ to obtain $\lambda^-$,
and proceed just as above:

\begin{enumerate}

\item[($2'$)] Use $\phi^{-1}$ recursively to map $T^-$ to a tableau $S^-$ of
shape $\lambda^-$ and an integer~$i$ 
such that $1 \le i\le n$.

\item[($1'$)] Produce $S$ from $S^-$ by adding $n +1$ to the unique square in
$\lambda/\lambda^-$. In this case 
$n+1$ does not land in the same square it occupied in $T$. It lands in the
removable square of $\lambda$ associated by the bumping scheme to this addible
square. 

\end{enumerate}

It is straightforward to see that case (I) of this definition inverts case (I)
in the definition of $\phi$, case (II) inverts case (II), and case (III) inverts
case (III).

\end{proof}

\begin{prop}

$\phi(S,i)$ can be computed as follows:

\begin{enumerate}

\item Remove from the tableau $S$ all entries $j>i$; place $i$ in the lone
square of the resulting tableau. Call the result $T^i$.

\end{enumerate}

\par\noindent
Now construct a sequence $T^{i+1}, T^{i+2}, \ldots, T^{n+1}= T$ using these
rules:

\begin{enumerate}

\item[(2)] If the square containing $j$ in $S$ is vacant in $T^{j-1}$, produce
$T^j$ from $T^{j-1}$ by 
adding $j$ to this square.

\smallskip

\item If, on the other hand, the square containing $j$ in $S$ is occupied in
$T^{j-1}$, then it 
will be a removable square of $T^{j-1}$. Produce $T^j$ by adding $j$ to
$T^{j-1}$ in the addible square associated with this removable square by the
bumping scheme.

\end{enumerate}

\end{prop}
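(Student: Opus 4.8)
The plan is to show that the iterative procedure in the statement simply unrolls the recursive definition of $\phi$, so that the two descriptions agree term by term. For each $j$ with $i\le j\le n+1$, let $S^{(j)}$ be the tableau obtained from $S$ by deleting every entry exceeding $j$; since $i\notin S$, it carries the entries $\{1,\ldots,j\}\setminus\{i\}$ and has shape $\lambda^{(j)}\vdash j-1$, with $S^{(n+1)}=S$. The central claim, to be proved by induction on $j$, is that the tableau called $T^{j}$ in the statement equals $\phi(S^{(j)},i)$. Granting this, the case $j=n+1$ gives $T^{n+1}=\phi(S,i)=T$, which is exactly what the proposition asserts.

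For the base case I would note that $S^{(i)}$ carries the entries $\{1,\ldots,i-1\}$ and has size $i-1$, so computing $\phi(S^{(i)},i)$ triggers Case I at once (here $i=(i-1)+1$), placing $i$ in the lone square of $\text{sh}(S^{(i)})$; this is precisely the definition of $T^{i}$ in step (1). For the inductive step I would compare the passage from $\phi(S^{(j-1)},i)$ to $\phi(S^{(j)},i)$ under $\phi$ with the passage from $T^{j-1}$ to $T^{j}$ under the procedure. Write $r$ for the square occupied by $j$ in $S$, equivalently in $S^{(j)}$. Removing the largest entry $j$ from $S^{(j)}$ produces $S^{(j-1)}$ with $\lambda^{(j-1)}=\lambda^{(j)}\setminus\{r\}$; by the inductive hypothesis $T^{j-1}=\phi(S^{(j-1)},i)$, whose shape is a successor of $\lambda^{(j-1)}$ and hence has the same size $j-1$ as $\lambda^{(j)}$.

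The heart of the matter is that the split between Cases II and III of $\phi$ is detected exactly by whether $r$ is vacant or occupied in $T^{j-1}$. If $\text{sh}(T^{j-1})\ne\lambda^{(j)}$, then we are in Case II; since $\text{sh}(T^{j-1})\supseteq\lambda^{(j-1)}=\lambda^{(j)}\setminus\{r\}$ and its one extra square cannot be $r$, the unique square of $\lambda^{(j)}$ missing from $\text{sh}(T^{j-1})$ is $r$ itself, so $r$ is vacant and Case II inserts $j$ into $r$, matching rule (2). If instead $\text{sh}(T^{j-1})=\lambda^{(j)}=\lambda^{(j-1)}\cup\{r\}$, then $r$ is occupied and removable, we are in Case III, and $\phi$ inserts $j$ into $\beta_{\lambda^{(j)}}(r)$, matching rule (3). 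I expect the main obstacle to be exactly this bookkeeping: verifying that the square erased in Case II is the home square $r$ of $j$, and that the vacant/occupied test cleanly separates the two cases. Once that identification is in place, rules (2)--(3) are nothing but Cases II--III of $\phi$ reexpressed through $S$ rather than through the recursion, and the induction closes.
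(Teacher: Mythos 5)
Your proof is correct and follows essentially the same route as the paper's: an induction on $j$ showing that the iterative procedure tracks the recursive definition of $\phi$ applied to the truncations $S_{\le j}$, with rule (1) matching Case I, rule (2) matching Case II, and rule (3) matching Case III. The paper states this as a ``trivial induction'' and leaves the details to the reader; your inductive invariant $T^{j}=\phi(S_{\le j},i)$ and the vacant/occupied analysis of the square $r$ are precisely the details being omitted there.
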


\begin{proof}

A trivial induction establishes that if $S_{\le j}$ is the tableau obtained from
$S$ by deleting all entries $>j$, then either $\text{sh}(T^j) = \text{sh}(S_{\le
j})$ or $\text{sh}(T^j) = \text{sh}(S_{\le j})^{+-}$. This guarantees that the
sequence in (2) and (3) is well-defined. It is easy to see that the algorithm
described here satisfies the recursive description given above for~$\phi$. In
particular, rule (1) corresponds to case (I) in the definition of $\phi$, rule
(2) corresponds to case (II), and rule (3) corresponds to case (III).

\end{proof}

As an example, let $\phi$ be the insertion scheme induced by the row insertion
bumping scheme of Fig.~3. The sequence of tableaux obtained in inserting $i= 3$
into the tableau 
\begin{equation}
S=\begin{array}{ccccc}
10&11& & & \\
4&5&8& & \\
1&2&6&7&9
\end{array}\tag{2.2}
\end{equation}
is shown in Fig.~5.

\mypdf{1.0}{fig5}{The process of inserting 3 into the tableau $S$ of (2.2) using
the row insertion bumping scheme. The rule from Prop.~2 used at each stage is
shown under the appropriate arrow.}

Notice that the result of the insertion in Fig.~5 is the same as that of just
row inserting 3 into $S$. The numbers which were added in Fig.~5 using rule (3)
(the one involving bumping) are exactly those which are bumped to a higher row
during row insertion. We shall prove in a moment that the insertion scheme
engendered by the row insertion bumping scheme is indeed ordinary row insertion. 

As a more exotic example, consider again the bumping scheme in Fig.~4. The
sequence of tableaux obtained when 3 is inserted into the tableau $S$ of (2.2)
using this bumping scheme is shown in Fig.~6.

\mypdf{1.0}{fig6}{The process of inserting 3 into the tableau $S$ of (2.2) using
the bumping scheme of Fig.~4. The rule used at each stage is shown under the
appropriate arrow.} 

The final tableau of Fig.~6 can be obtained from the initial tableau of (2.2) by
an algorithm resembling row insertion but having a different bump path. Most of
the entries of~$S$ are unchanged by the insertion; but the 3 has bumped the 6 up
to the top row, and the 6 has then bumped the 10 down to the second row. The
bumped numbers 
are those added using rule (3) of Prop.~2. This interpretation shows how close
all our insertion schemes are to row insertion: different bumping schemes just
result in different bump paths. 

In defining an insertion scheme, we inserted the letter $i$ into a tableau whose
entries are $1, 2, \ldots,i-1, i+ 1, \ldots, n +1$. In what follows, however, it
will be desirable to be able to insert $i$ into any tableau $S$ with distinct
entries not including $i$. This can be done by an obvious extension of the
algorithm of Prop.~2: one writes down all the entries of $S$ which are less than
$i$, puts $i$ in the lone square, then adds in increasing order the entries of
$S$ which are greater than $i$, bumping them if need be using 
$\{\beta_\lambda\}$. We use $\phi$ to denote this extended insertion scheme as
well. 

If this extension seems unmotivated by the algebra, one may adopt an alternative
approach in which integers are always inserted into {\it standard\/} tableaux. To insert
$i$, one first adds 1 to all entries of $S$ which are greater or equal to $i$,
and then inserts $i$ using Prop.~2. This approach is actually the direct
translation of Young's algebraic arguments, and is the one we originally
followed. The alternative approach taken here slightly simplifies the algorithms
and highlights their similarity to the classical RSA. 

\begin{prop}
If $\phi$ is the insertion scheme engendered by the row insertion bumping
scheme, then $\phi(S, i)$ is the tableau $(S \leftarrow i)$ obtained by row
inserting $i$ into $S$. 
\end{prop}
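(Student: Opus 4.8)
The plan is to work from the computational description of $\phi$ given in Proposition 2, specialized to the row insertion bumping scheme, in which the lone square sits at the end of the bottom row and $\beta_\lambda$ carries a removable square in row $\rho$ to the unique addible square in row $\rho+1$. Writing $S_{\le j}$ for the tableau obtained from $S$ by deleting every entry exceeding $j$, I would prove by induction on $j$ the sharper statement that the intermediate tableau $T^j$ built in Proposition 2 equals the ordinary row insertion $(S_{\le j}\leftarrow i)$. Specializing to $j = \max S$ then gives $\phi(S,i) = (S\leftarrow i)$.

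For the base case $j=i$, both $T^i$ and $(S_{<i}\leftarrow i)$ place $i$ at the end of the bottom row of $S_{<i}$: on the insertion side because $i$ exceeds every entry of $S_{<i}$ and is simply appended, and on the $\phi$ side by the definition of the lone square for the row scheme. The inductive step, passing from $j-1$ to the next entry $j>i$ of $S$, sitting at a square $c$, reduces to the following lemma about row insertion, which is the heart of the argument. Put $S'=S_{\le j-1}$, let $c$ be the corner at which the new maximum $j$ is adjoined, and compare $U=(S'\leftarrow i)$ with $V=(S'+(j\text{ at }c)\leftarrow i)$. I claim that if $c\notin\text{sh}(U)$ then $V$ is $U$ with $j$ adjoined at $c$, while if $c\in\text{sh}(U)$ (which forces the new square of $U$ to be exactly $c$) then $V$ is $U$ with $j$ adjoined at $\beta_{\text{sh}(U)}(c)$. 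These two alternatives are precisely rules (2) and (3) of Proposition 2, so the lemma closes the induction.

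To prove the lemma I would track the bump path of $i$ through both tableaux at once. Since $c=(\rho,\gamma)$ lies in row $\rho$ and $S'$, $S'+(j\text{ at }c)$ agree in all lower rows, the two bump paths coincide until the insertion reaches row $\rho$, delivering the same element $y$ there. If the insertion into $S'$ terminates below row $\rho$, or bumps strictly inside row $\rho$ (that is, $y$ is less than the last entry of row $\rho$ of $S'$), then adjoining $j$ at the far end of row $\rho$ alters neither which element is bumped nor the rest of the path; hence $j$ is never disturbed and $V=U+(j\text{ at }c)$, which is the $c\notin\text{sh}(U)$ case. If instead $y$ appends to row $\rho$ of $S'$, creating the new square exactly at $c$, then in $S'+(j\text{ at }c)$ the same $y$ must bump $j$, after which $j$, being maximal, floats upward and appends to row $\rho+1$.

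The one genuinely delicate point, and the step I expect to be the main obstacle, is the column bookkeeping in this last case: one must verify that the maximal element $j$, bumped out of $c=(\rho,\gamma)$, appends to row $\rho+1$ at precisely the addible square $\beta_{\text{sh}(U)}(c)$ prescribed by the row scheme. This follows from the addibility relations for $c$, namely $\text{len}_\rho(S')=\gamma-1$ and $\text{len}_{\rho+1}(S')\le\gamma-1$, which guarantee both that $c$ is a genuine removable corner of $U$ (so that $\beta_{\text{sh}(U)}(c)$ is defined) and that the column into which $j$ is appended, $\text{len}_{\rho+1}(U)+1$, is exactly the unique addible square of $U$ in row $\rho+1$. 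Granting this identification, rows $1,\dots,\rho$ of $U$ and $V$ coincide while $V$ carries the single extra entry $j$ at $\beta_{\text{sh}(U)}(c)$, matching rule (3) and completing the induction.
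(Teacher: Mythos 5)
Your proof is correct, but it runs along a different axis than the paper's. You induct on the stages $j$ of the Proposition~2 computation, maintaining the invariant $T^j = (S_{\le j}\leftarrow i)$, and the work is concentrated in a commutation lemma about row insertion itself: adjoining a new maximum $j$ at a corner $c$ and then inserting $i$ gives the same tableau as inserting $i$ first and then placing $j$ either at $c$ (if $c$ is still vacant) or at the addible square one row higher (if the insertion's new square is exactly $c$); you also correctly isolate and settle the one delicate point, that the bumped maximum lands precisely at $\beta_{\text{sh}(U)}(c)$. The paper instead inducts on the number of rows of $S$: it verifies the single-row case directly, then observes that the first row of $\phi(S,i)$ agrees with that of $(S\leftarrow i)$ and that the remainder of the Proposition~2 computation coincides with the computation of $\phi(S^*, S_{1b})$, where $S^*$ is $S$ minus its bottom row and $S_{1b}$ is the entry displaced by $i$ --- exactly mirroring the recursive structure of row insertion (bump the displaced entry into the tableau above). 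The paper's argument is shorter and leans on that recursion, but it leaves implicit how the stages of Proposition~2, which process the entries of $S$ in increasing order while interleaving all rows, split into a ``first row'' part and a ``remaining rows'' part. Your stage-by-stage invariant is stronger than the bare statement of the proposition, makes that bookkeeping fully explicit, and your commutation lemma is a clean fact about row insertion of independent interest; the price is a longer, more case-laden argument.
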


\begin{proof}

Let $S_{ab}$ be the element in the $a^\text{th}$ row (from the bottom) and
$b^\text{th}$ column of $S$, and assume to begin with that $S$ is a single row
$S_{11}, \ldots, S_{1n}$. To compute $\phi(S, i)$ using Prop.~2, we write all
entries of $S$ which are $\le i$, and we place $i$ at the end of this row. If
$S_{1, b-1} \le i< S_{1b}$, then $S_{1b}$ has just been displaced by $i$, so
$S_{1b}$ must be added to the tableau using rule (3). Since the row insertion
bumping scheme associates the removable square in the first row with the addible
square in the second row, this means that $S_{1b}$ is added as the leftmost
entry in row 2. Subsequent entries are added one at a time using rule (2), so
that they occupy the same positions in $T$ as in $S$. The resulting tableau $T$
is $(S \leftarrow i)$, so the proposition is proven in this case. 

Suppose now that the proposition is true for all tableaux with fewer rows than~$S$.
If $S_{1, b-1} \le i< S_{1b}$ , then inserting $i$ using Prop.~2 produces a
tableau whose first row begins $S_{11}, S_{12}, \ldots, S_{1, b-1} , i$.
Further, since bumped elements always move upward, the elements to the right of
$i$ in row 1 of $T$ must be exactly those of $S$. The first row of $\phi(S, i)$
is thus the first row of $(S \leftarrow i)$. In the remainder of the insertion
algorithm, 
$S_{1b}$ is added to the second row at the right of all elements which are
smaller than it, and insertion continues. This is exactly what would happen in
computing $\phi(S^*, S_{1b})$, where $S^*=S - \text{($1^{\text{st}}$ row)}$.
Since $S^*$ has fewer rows than $S$, however, $\phi(S^*, S_{1b}) = (S^*
\leftarrow S_{1b})$, and the proposition follows by induction.

\end{proof}

Proposition 3 shows that we have succeeded in producing from the downward
recursion (1.1) a family of bijections proving the upward recursion and
including row insertion. Column insertion is also included among these
bijections; its insertion scheme is the conjugate of that shown in Fig.~3. This
completes the combinatorial translation of~(1.6). 

We now turn our attention to the proof of (I.1) in (1.7). In order to prove
(I.1) combinatorially, we need a bijection 
$$
\rho:\sigma\longleftrightarrow (P, Q)
$$
between permutations and pairs of standard tableaux of the same shape. The
algebra in (1.7) tells us exactly how to compute $\rho$ recursively. The left
hand expression counts pairs $(P, Q)$ of tableaux together with an integer $i$.
The first equality says to insert $i$ into $P$ using $\phi$, i.e., to employ the
upward recursion (1.2). The second equality is just a relabeling. The last
equality, coming from the downward recursion (1.1), says to add a square
containing $n+1$ to $Q$ so as to produce a tableau of the same shape as the new
$P$ tableau. 

In short, given a permutation $\sigma$ one proceeds as in the ordinary RSA to
insert the letters of $\sigma$ one at a time (using the insertion scheme $\phi$)
to produce the $P$~tableau. The $Q$ tableau records the positions at which new
squares are added to the boundary of the $P$ tableau at each stage of this
algorithm. Since $\phi$ is a bijection, each stage of this algorithm is
invertible; so $\rho$ is a bijection as well. 

As an example, the figure below shows the tableaux produced when the permutation
$4265173$ is inserted one letter at a time using the bumping scheme of Fig.~4.

\mypdf{0.9}{fig7}{The $P$ tableaux obtained by inserting the permutation
$4265173$ one letter at a time using the bumping scheme of Fig.~4. The final
$P$ and $Q$ tableaux are shown at bottom.}

With the construction of $\rho$ we complete our program of combinatorializing
Young's algebraic arguments. It is pleasant that this effort so easily
reconstructs the classical RSA, though it is startling that it produces so many
other related algorithms. We end with some remarks on these. 

By way of stressing the number of the correspondences $\rho$, we note that for
any standard tableau $P$ one can find a correspondence $\rho$ such that
$\rho(\text{id}) = (P, P)$. (When the identity is inserted, the new entry is
always added to the lone square, so $P$ must equal $Q$.) Indeed, in finding such
a $\rho$ one needs only specify the lone squares of the tableaux $P_{\le i}$; so
in fact a large number of correspondences produce a given target tableau from
the identity. 

Not all these correspondences are equally natural or convenient, however. For
example, the classical RSA extends to words other than permutations. It provides
a bijection between $\{1, 2, \ldots, n\}^*$ and the class of ordered pairs $(P,
Q)$ where $P$ and $Q$ have size $n$, $\text{sh}(P) = \text{sh}(Q)$, $Q$ is
standard, and $P$ is column-strict. (A column-strict tableau is one in which
entries increase strictly in columns and weakly in rows.) It is not clear, in
general, whether or not our algorithms admit such extensions.

Further properties of these algorithms, and other types of tableaux are
considered in [3] and later publications. 

\section*{References}

\begin{enumerate}

\item[{[1]}] Knuth, Donald E., ``The Art of Computer Programming, vol.~3,
Sorting and Searching,'' Reading, Mass., Addison-Wesley, 1973. 

\item[{[2]}] Knuth, Donald E., ``Permutations, Matrices, and Generalized Young
Tab-leaux,'' Pacific J.~Math., {\bf 34}, 709-727 (1970). 

\item[{[3]}] T. J. McLarnan, ``Tableaux recursions and symmetric Schensted
Correspondences for ordinary, shifted and oscillating tableaux'',
(U.C.S.D.~Thesis 1986) 

\item[{[4]}] Robinson, G.~de B., ``On the Representations of the Symmetric
Group,'' Amer.~J.~Math., {\bf 60}, 745-760 (1938). 

\item[{[5]}] Rutherford, Daniel Edwin, ``Substitutional Analysis,'' New York,
Haffner, 1968. 

\item[{[6]}] Schensted, C., ``Longest Increasing and Decreasing Subsequences,''
Canad.{} J.{} Math., {\bf 13}, 179-191 (1961). 

\item[{[7]}] Schützenberger, M.-P., ``La correspondance de Robinson,'' in
Dominique Foata, ed., “Combinatoire et représentation du groupe symétrique,”
Springer Lecture Notes no.~579, Berlin, Springer-Verlag, 1977. 

\item[{[8]}] Schützenberger, M.-P., ``Quelques remarques sur une construction de
Schensted,'' Math.~Scand., {\bf 12}, 117-128 (1963). 

\item[{[9]}] Young, Alfred, ``On Quantitative Substitutional Analysis III,''
Proc.{} London Math.{} Soc., {\bf 28}, 255-292 (1928) or in ``The Collected
Papers of Alfred Young,'' Mathematical Expositions no. 21, Toronto, University
of Toronto Press, 1977. 

\end{enumerate}

\end{document}